\numberwithin{equation}{section}
\theoremstyle{plain}
\newtheorem{Th}{Theorem}[section]
\newtheorem{Cor}[Th]{Corollary}
\newtheorem{Prop}[Th]{Proposition}
 \theoremstyle{definition}
\newtheorem{?}[Th]{Problem}
\begin{document}

\title[Spin $m$-quasi-Einstein manifolds \& the Hitchin-Thorpe Inequality]{Compact 4-Dimensional Spin Gradient $m$-quasi-Einstein Manifolds Satisfy the Hitchin-Thorpe Inequality when $m\ge 1$}

\author{Brian Klatt}

\address{Rutgers University \\ Department of Mathematics \\
New Brunswick, NJ \\ United States} 

\email{brn.kltt@gmail.com}

 \subjclass[2010]{Primary: 53C25}

 \keywords{Ricci soliton, $m$-quasi-Einstein manifold, Hitchin-Thorpe inequality}

\begin{abstract} We prove that a compact, connected, and oriented 4-dimensional gradient $m$-quasi-Einstein manifold with $m\in [1, \infty]$ which is additionally a spin manifold must satisfy the Hitchin-Thorpe Inequality. We show further that the homeomorphism-type of the universal cover of such a manifold is either $S^4$ or a connected sum of some number of $S^2\times S^2$ when the potential function is nontrivial.
\end{abstract}

\maketitle

\section{Introduction}

In \cite{Thorpe}, Thorpe defined a natural notion of curvature for a $2k$-plane in the tangent space of a Riemannian manifold which generalized the usual notion of curvature for a $2$-plane. He then proved that if the curvature of a $2k$-plane and its complementary plane are identical in a compact oriented $4k$-manifold then there holds an inequality between the manifold's Euler characteristic and $k^{th}$ Pontryagin number, namely, $(2k)!\,\chi(M^{4k})\geq (k!)^2\,|p_k(M^{4k})|$. 

It was not noted by Thorpe at the time, but when $k=1$ the hypothesis on the curvature is identical to the condition that $(M^4, g)$ is an Einstein manifold; that is, the Ricci tensor is in constant proportion to the metric tensor. In this case the inequality of Thorpe reduces to $2\chi\geq |p_1|$. Hitchin later discovered this in \cite{Hitchin} and characterized the topology when equality occurs. The inequality $2\chi\geq 3|\tau|$ or $2\chi\pm 3\tau\geq 0$ on a compact oriented $4$-dimensional Einstein manifold came to be known as the Hitchin-Thorpe inequality; here $\tau$ is the signature of the $4$-manifold, and the conversion between $p_1$ and $\tau$ is a consequence of the Hirzebruch Signature Theorem.

In \cite{Cao}, H.-D. Cao asked whether the same inequality might hold on a compact oriented $4$-dimensional Ricci soliton. By definition this is a Riemannian manifold satisfying
$$r+\frac{1}{2}\pounds_X g=\lambda g$$
where $\lambda$ is a constant. Ricci solitons are generalizations of Einstein manifolds that have an important role in the singularity analysis of the Ricci flow, as they are special solutions to the flow, evolving only by diffeomorphisms and homothetic scaling (see \cite{Cao}). Therefore the Hitchin-Thorpe inequality would have implications for an analysis of singularities of $4$-dimensional Ricci flow. Some work has been completed on this question with additional analytic or geometric hypotheses; see for example \cite{Fer}, \cite{Ma}, \cite{Tadano}. 

In recent years, the notion of an $m$-quasi-Einstein manifold has received a fair bit of attention. By definition this is a Riemannian manifold such that \cite{WylKen}
$$r+\frac{1}{2}\pounds_X g-\frac{1}{m}X^{\flat}\otimes X^{\flat}=\lambda g$$ where $\lambda$ is a real constant and $m$ is an extended real number. By convention, when $m=\pm\infty$, we obtain a Ricci soliton, and when $m=0$ we have an Einstein manifold. The main interest in such spaces has been when $X=\nabla f$, due to a connection with Einstein warped products and conformally Einstein manifolds, among other things; see \cite{Case}. These we call \emph{gradient} $m$-quasi-Einstein manifolds.

In this work, we prove, with an additional topological hypothesis, the conjecture of Cao in the wider context of gradient $m$-quasi-Einstein manifolds where the range of $m$ is appropriately restricted. Precisely, we prove the following

\begin{Th} \label{main} Let $(M^4, g)$ be a compact, connected, oriented \& spin $4$-dimensional gradient $m$-quasi-Einstein manifold with $m\in [1,\infty]$. Then $M$ satisfies the Hitchin-Thorpe inequality, i.e. $2\chi\pm 3\tau\ge0$, and if $(M^4, g)$ is a nontrivial such gradient $m$-quasi-Einstein manifold then we have the strict inequality $2\chi\pm 3\tau>0$.
\end{Th}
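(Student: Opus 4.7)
My plan is to reduce $2\chi\pm 3\tau\ge 0$ to the two topological facts $\tau(M)=0$ and $\chi(M)\ge 0$, each extracted by combining the $m$-quasi-Einstein equation with the spin hypothesis. The signature will be killed by a Lichnerowicz/index-theorem argument once one has a sign on the scalar curvature; the Euler characteristic is then handled by a weighted Bochner vanishing of the first Betti number using the Bakry-\'Emery structure implicit in the equation.

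First, I would show that $\lambda>0$ and that the scalar curvature satisfies $R\ge 0$ everywhere, with $R>0$ somewhere when $f$ is nonconstant. This is Chen's theorem for shrinking Ricci solitons ($m=\infty$); for finite $m\ge 1$ it should follow from a maximum-principle application to the weighted Laplacian $\Delta_f R=\Delta R-\langle\nabla f,\nabla R\rangle$ using the Bochner-type identity for $R$ available in the existing $m$-quasi-Einstein literature. The spin hypothesis then activates the Lichnerowicz formula $D^2=\nabla^*\nabla+R/4$: for any harmonic spinor $\phi$, $0=\int|\nabla\phi|^2+(R/4)|\phi|^2$ forces $\phi$ to be parallel and to vanish on the nonempty open set $\{R>0\}$, so $\ker D=0$. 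The Atiyah-Singer index theorem then gives $0=\mathrm{ind}(D^+)=\hat A(M)=-\tau(M)/8$, whence $\tau(M)=0$.

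For the Euler characteristic I would apply the weighted Bochner formula for harmonic 1-forms against the measure $e^{-f}\,dV$: positivity of the Bakry-\'Emery tensor $\mathrm{Ric}+\nabla^2 f-\tfrac{1}{m}df\otimes df=\lambda g$ with $\lambda>0$ rules out nontrivial harmonic 1-forms (the hypothesis $m\ge 1$ is what absorbs a Kato-type error coming from the $(1/m)\,df\otimes df$ term), giving $b_1(M)=0$. Combined with $\tau(M)=0$ this produces $\chi(M)=2+b_2(M)\ge 2$, and hence $2\chi\pm 3\tau=2\chi\ge 4>0$ in the nontrivial case; the trivial Einstein case ($f$ constant) recovers the classical Hitchin-Thorpe inequality with its known equality cases.

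The main technical obstacle I foresee is giving a uniform proof of the scalar-curvature nonnegativity valid across the entire range $m\in[1,\infty]$, since the shrinking-Ricci-soliton and finite-$m$ settings traditionally rely on distinct differential identities, and one must additionally rule out any compact steady or expanding $m$-quasi-Einstein so that $\lambda>0$ is automatic and $R$ is strictly positive somewhere (the latter being essential for the unique-continuation step in the Lichnerowicz argument). A secondary but more routine issue is the precise form of the weighted Bochner estimate at the edge $m=1$, where the absorbing inequality used to kill $b_1$ is sharpest.
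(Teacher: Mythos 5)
Your proposal is correct, and its skeleton coincides with the paper's proof almost step for step: reduce via the compact trivial-or-shrinking dichotomy (Proposition~\ref{TrivOrShrink}) to the shrinking case, invoke scalar-curvature positivity for $m\in[1,\infty]$ (Proposition~\ref{PosScal}, due to Case--Shu--Wei), kill the signature with Lichnerowicz plus Atiyah--Singer (Proposition~\ref{spintau}), kill $b_1$, and conclude $2\chi\pm 3\tau = 2(b_0+b_2+b_4) = 2(2+b_2)\ge 4$. The one genuine divergence is the $b_1=0$ step: the paper uses Wylie's Myers-type theorem (Proposition~\ref{Wylie}) to get finite fundamental group and then $b_1=0$ via Hurewicz, whereas you propose a weighted Bochner vanishing against $e^{-f}\,dV$. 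Your route works, but your stated worry about a ``Kato-type error absorbed by $m\ge 1$'' is misplaced: the relevant tensor in the weighted Bochner formula is the $\infty$-Bakry-\'Emery tensor, and the quasi-Einstein equation gives $r+\nabla^2 f=\lambda g+\tfrac{1}{m}\,df\otimes df\ge \lambda g>0$ for \emph{every} $m\in(0,\infty]$, since the extra term has a favorable sign; no absorption is needed, and the hypothesis $m\ge 1$ enters only through scalar-curvature positivity, not through the $b_1$ argument. Conversely, the paper's route buys strictly more than yours: finiteness of $\pi_1$ (not just $b_1=0$) is exactly what makes the universal cover compact in the proof of Theorem~\ref{aux}, so your Bochner-only argument would suffice for Theorem~\ref{main} but would have to be supplemented for the follow-up topology result. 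Finally, what you flag as the ``main technical obstacle'' --- uniform scalar-curvature positivity across $m\in[1,\infty]$ together with ruling out compact steady/expanding cases --- is not an obstacle at all: both are standard citable results in the quasi-Einstein literature, which is precisely how the paper handles them.
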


By pushing our analysis only slightly further we can characterize the topology more completely when our hypothesis holds.

\begin{Th} \label{aux} If $(M^4, g)$ is a compact, connected, oriented \& spin $4$-dimensional nontrivial gradient $m$-quasi-Einstein manifold with $m\in [1,\infty]$, then the universal cover $\tilde{M}^4$ satisfies $\tilde{M}^4\approx S^4\#k(S^2\times S^2)$ for some $k$.
\end{Th}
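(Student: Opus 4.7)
My plan is to combine the Dirac-operator analysis that underlies Theorem~\ref{main} with Freedman's homeomorphism classification of simply connected closed topological $4$-manifolds. To invoke Freedman, I need $\tilde M$ to be a smooth, simply connected, closed spin $4$-manifold whose intersection form I can identify. Smoothness and the spin property lift automatically to the universal cover, so the work reduces to (i) showing $\pi_1(M)$ is finite, so $\tilde M$ is closed, and (ii) showing $\tau(\tilde M)=0$, so that the (necessarily unimodular and even) intersection form of $\tilde M$ is a direct sum of hyperbolic planes.

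For (i), I would first note that nontriviality of $f$ together with compactness forces $\lambda>0$ in the range $m\in[1,\infty]$: tracing the $m$-quasi-Einstein equation and integrating against the natural weighted measure (with density $e^{-f}$ when $m<\infty$) rules out $\lambda\le 0$ unless $f$ is constant. With $\lambda>0$, a Myers-type diameter bound for the $m$-Bakry-\'Emery Ricci tensor (of Qian or Wei-Wylie type) applies on $M$ and on all of its Riemannian covers, giving $|\pi_1(M)|<\infty$ and hence $\tilde M$ compact.

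For (ii), my approach is a spin Lichnerowicz-type argument, as is surely the engine of Theorem~\ref{main}. Introducing a modified Dirac operator $D_f$ conjugate to $D$ via a suitable function of $f$, one checks that its Weitzenb\"ock formula takes the shape $D_f^2 = \nabla^*\nabla + \tfrac14 R_f + (\text{non-negative})$, where $R_f$ is an appropriate weighted scalar curvature; on a nontrivial compact gradient $m$-quasi-Einstein manifold with $m\in[1,\infty]$ and $\lambda>0$, this remainder is strictly positive, so $\ker D = 0$ and $\hat A(M)=\operatorname{ind}(D)=0$. In dimension four, $\hat A = -\tau/8$, so $\tau(M)=0$, and since signature is multiplicative under finite covers, $\tau(\tilde M) = |\pi_1(M)|\cdot\tau(M) = 0$.

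The remaining step is standard $4$-manifold topology. The intersection form $Q_{\tilde M}$ is unimodular by Poincar\'e duality, even because $\tilde M$ is spin, and of signature zero by step~(ii); the classification of even indefinite unimodular integral lattices then forces $Q_{\tilde M}\cong kH$ for some $k\ge 0$, where $H$ is the hyperbolic form and $k=0$ corresponds to $b_2=0$. Because $\tilde M$ carries a smooth structure, its Kirby-Siebenmann invariant vanishes, so Freedman's classification theorem identifies $\tilde M$ up to homeomorphism by $Q_{\tilde M}$ alone. Since $\#k(S^2\times S^2)$ (resp.\ $S^4$ when $k=0$) realizes this form, we conclude $\tilde M\approx S^4\# k(S^2\times S^2)$. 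The main obstacle in this program is step~(ii): identifying the correct modified Dirac operator and proving positivity of the Weitzenb\"ock curvature term uniformly throughout $m\in[1,\infty]$ is where the spin hypothesis and the range of $m$ are essentially used, while everything else is topological packaging.
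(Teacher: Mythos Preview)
Your outline is correct, but you are working harder than necessary in step~(ii), and this is precisely where you diverge from the paper. The paper does \emph{not} use a modified Dirac operator or weighted scalar curvature at all: it simply invokes the known fact (Case--Shu--Wei) that a compact nontrivial gradient $m$-quasi-Einstein manifold with $m\in[1,\infty]$ has \emph{ordinary} scalar curvature $s>0$, and then the standard Lichnerowicz formula $D^2=\nabla^*\nabla+\tfrac{s}{4}$ immediately kills harmonic spinors, giving $\tau=0$. So what you flag as ``the main obstacle'' is in fact a non-obstacle; the spin hypothesis and the range $m\in[1,\infty]$ enter only through this off-the-shelf positivity result, not through any twisted Weitzenb\"ock analysis.

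Conversely, your final topological packaging is cleaner than the paper's. You go directly from ``even unimodular form of signature zero'' to $Q_{\tilde M}\cong kH$ via the classification of indefinite even lattices, and then invoke Freedman (with vanishing Kirby--Siebenmann invariant since $\tilde M$ is smooth) to identify $\tilde M$ with $\#k(S^2\times S^2)$. The paper instead first writes $\tilde M\approx S^4\# mK3\# n\overline{K3}\# p(S^2\times S^2)$ using a general ``building block'' list, uses $\tau=0$ to force $m=n$, and then observes $K3\#\overline{K3}\approx 22(S^2\times S^2)$ to absorb the $K3$ factors. Both routes are valid; yours is the more direct one here. Finally, your use of multiplicativity of $\tau$ under finite covers is fine, though the paper simply reruns the Lichnerowicz argument on $\tilde M$ after pulling back the $m$-quasi-Einstein structure.
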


\section{Preliminaries}

\subsection{Conventions and Notation}

We will assume that all manifolds are connected. The lower case letters $r$ and $s$ will denote the Ricci and scalar curvatures of a Riemannian metric, respectively. The Laplacian of a smooth function on a Riemannian manifold is the trace of its Hessian, $\Delta f=g^{ij}\nabla_i\nabla_j f$, where $\nabla$ is the Levi-Civita connection. The divergence of a vector field $X=X^i$ and a one-form $\omega=\omega_i$ are given by $\mathrm{div}X=\nabla_i X^i$ and $\mathrm{div}\,\omega=g^{ij}\nabla_i\omega_j$, while the divergence of a symmetric covariant two-tensor $h$ is the one-tensor defined by $\mathrm{div}\,h=g^{ij}\nabla_i h_{jk}$. Finally, the signature of a smooth oriented $4k$-manifold is the difference between the dimensions of the largest subspaces of $H^{2k}(M^4, \mathbb{R})$ on which the quadratic form $([\phi], [\psi] )\mapsto \int \phi\wedge\psi$ is positive and negative definite.

\subsection{$m$-quasi-Einstein manifolds}

An \emph{$m$-quasi-Einstein manifold} is a Riemannian manifold $(M, g)$ which satisfies 
\begin{equation}
r+\frac{1}{2}\pounds_X g-\frac{1}{m}X^{\flat}\otimes X^{\flat}=\lambda g \label{eq:RS}
\end{equation}
for some vector field $X$, real constant $\lambda$, and extended real number $m$. If $m=\pm\infty$ then $\frac{1}{m}=0$ by convention; this is called a \emph{Ricci soliton}. When $m=0$ the equation reduces, again by convention, to $r=\lambda g$; that is, we have an Einstein manifold. If $X=0$ the $m$-quasi-Einstein manifold is called $\emph{trivial}$, and is consequently an Einstein manifold. If $\lambda>0$, $\lambda<0$, or $\lambda=0$ we will say that $(M, g)$ is \emph{shrinking}, \emph{expanding}, or \emph{steady}, respectively.

We state a couple well-known facts about compact gradient $m$-quasi-Einstein manifolds. First, we have the following (see Proposition 4.22 in \cite{Case})

\begin{Prop}\label{TrivOrShrink}
A compact, gradient, expanding or steady $m$-quasi-Einstein manifold is trivial.
\end{Prop}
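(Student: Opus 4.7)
The plan is to combine the scalar maximum principle at the extrema of $f$ with an auxiliary first integral derived from the contracted second Bianchi identity, in the spirit of Ivey's proof that compact steady or expanding gradient Ricci solitons must be trivial.

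First I would trace the defining equation to obtain $s+\Delta f - \tfrac{1}{m}|\nabla f|^2 = n\lambda$. On the compact $M$, $f$ attains a maximum $p_{\max}$ and a minimum $p_{\min}$; at each, $\nabla f=0$, so the identity reduces to $s+\Delta f=n\lambda$. Since $\Delta f\le 0$ at $p_{\max}$ and $\Delta f\ge 0$ at $p_{\min}$, this gives $s(p_{\max})\ge n\lambda\ge s(p_{\min})$. Next I would derive a first integral by taking the divergence of the defining equation and applying the contracted Bianchi identity $\nabla^i R_{ij}=\tfrac{1}{2}\nabla_j s$, the Ricci commutator $\nabla^i\nabla_i\nabla_j f = \nabla_j\Delta f + R_{jk}\nabla^k f$, the defining equation itself (to re-express $R_{jk}\nabla^k f$), and the trace identity (to eliminate $\nabla_j\Delta f$). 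A direct computation yields
\[
\nabla_j\Psi \;=\; \tfrac{2}{m}\,\Psi\,\nabla_j f,\qquad \Psi := s + \tfrac{m-1}{m}|\nabla f|^2 + \lambda(m-n),
\]
equivalently $\nabla(\Psi e^{-2f/m})=0$, giving on the connected $M$ the conservation law $\Psi = A\,e^{2f/m}$ for some constant $A\in\mathbb{R}$ (as $m\to\infty$ this formally recovers Hamilton's identity $s+|\nabla f|^2 - 2\lambda f = \text{const}$, which one would use in place of the exponential form when $m=\infty$).

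In the steady case $\lambda=0$, evaluating the conservation law at $p_{\max}$ and $p_{\min}$ (where $|\nabla f|^2=0$) gives $s = A\,e^{2f/m}$ there, so the extremum inequalities yield $A\,e^{2f(p_{\max})/m}\ge 0\ge A\,e^{2f(p_{\min})/m}$; positivity of the exponentials forces $A=0$, so $s = -\tfrac{m-1}{m}|\nabla f|^2$ on all of $M$. Integrating the traced identity gives $\int s\,dV = \tfrac{1}{m}\int|\nabla f|^2\,dV$, and combining the two relations forces $\int|\nabla f|^2\,dV = 0$, i.e.\ $f$ is constant. In the expanding case $\lambda<0$, the same evaluation gives $A\,e^{2f(p_{\max})/m}\ge \lambda m$ and $A\,e^{2f(p_{\min})/m}\le \lambda m$ with $\lambda m<0$; the positivity of the exponentials rules out $A\ge 0$, so $A<0$, and the two bounds rearrange to $e^{2f(p_{\max})/m}\le |\lambda|m/|A|\le e^{2f(p_{\min})/m}$, forcing $f$ to be constant.

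The main obstacle is the derivation of the first integral: the $1/m$ corrections to the standard Ricci-soliton contracted-Bianchi computation have to be tracked carefully, though the end result is a clean one-line ODE along gradient flow lines of $f$. Everything else reduces to a few lines of maximum principle and integration by parts.
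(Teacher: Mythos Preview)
The paper does not actually prove this proposition; it merely cites Proposition~4.22 of \cite{Case} and moves on. Your argument is therefore a self-contained replacement rather than a restatement, and it is essentially correct. The first integral you derive, $\Psi := s + \tfrac{m-1}{m}|\nabla f|^2 + \lambda(m-n)$ satisfying $\nabla\Psi = \tfrac{2}{m}\Psi\,\nabla f$, is exactly the standard conservation law for gradient $m$-quasi-Einstein metrics (after substituting the traced equation it becomes $\Delta f - |\nabla f|^2 = m\lambda - A\,e^{2f/m}$), so the ``main obstacle'' you flag is in fact a rediscovery of an identity already in \cite{CaseEtAl}. Your maximum-principle endgame is clean; the only caveat is that the clause ``with $\lambda m<0$'' in the expanding case silently assumes $m>0$, which is harmless for the paper's intended range $m\in[1,\infty]$ but would require the obvious symmetric modification if $m<0$ were allowed. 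The $m=\infty$ soliton case you defer to Hamilton's identity and Ivey's classical argument, which is standard and adequate.
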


Thus essentially only shrinking gradient $m$-quasi-Einstein manifolds are interesting in the compact case. Furthermore, for certain values of $m$ in this case we have the following (see Proposition 3.6 and Remark 3.8 of \cite{CaseEtAl})

\begin{Prop}\label{PosScal}
A compact, gradient, shrinking $m$-quasi-Einstein manifold with $m\in [1,\infty]$ has positive scalar curvature.
\end{Prop}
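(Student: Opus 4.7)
Since the proposition is quoted from \cite{CaseEtAl}, let me sketch how one would derive it. By Proposition \ref{TrivOrShrink} the only compact case to consider is shrinking ($\lambda > 0$) with $X = \nabla f$, and I would aim to establish $s > 0$ by applying the strong maximum principle to an elliptic PDE satisfied by $s$.

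First, tracing the equation $r + \nabla^2 f - \tfrac{1}{m}\,df\otimes df = \lambda g$ gives $s + \Delta f - \tfrac{1}{m}|\nabla f|^2 = n\lambda$. Taking the divergence of the tensor equation and applying the twice-contracted second Bianchi identity together with the commutation formula $\nabla^i \nabla_i \nabla_j f = \nabla_j \Delta f + r_{jk}\nabla^k f$, then substituting back using the equation itself (and in particular using the contraction of the equation with $\nabla f$ to eliminate $\nabla^i f\,\nabla_i \nabla_j f$), yields the first-order identity
$$\nabla s \;=\; \tfrac{2(m-1)}{m}\,r(\nabla f,\cdot) \;+\; \tfrac{2\bigl(s - (n-1)\lambda\bigr)}{m}\,df,$$
which reduces at $m = \infty$ to Hamilton's classical Ricci-soliton identity $\nabla s = 2\,r(\nabla f, \cdot)$.

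A further divergence would then produce a second-order identity of the shape
$$\Delta_f s \;=\; 2\lambda s \;-\; 2|r|^2 \;+\; \tfrac{1}{m}\,R_m,$$
where $\Delta_f := \Delta - \nabla f \cdot \nabla$ is the drift Laplacian and $R_m$ is a correction term vanishing at $m = \infty$. The hypothesis $m \ge 1$ is precisely what keeps the sign of $R_m$ compatible with the algebraic inequality $|r|^2 \ge s^2/n$; this makes $s$ a supersolution of the linear elliptic operator $L := \Delta_f - 2\lambda$, since $Ls \le -\tfrac{2}{n}s^2 + \tfrac{1}{m}R_m \le 0$ at any point where $s\le 0$. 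Because $\lambda > 0$ makes the zeroth-order coefficient of $L$ strictly negative, the strong maximum principle implies that if $s$ achieves a non-positive minimum $s_0$ at an interior point of the compact manifold $M$, then $s$ is identically equal to $s_0$; plugging $s \equiv s_0 \le 0$ back into the $\Delta_f s$ identity forces $|r|^2 \equiv \lambda s_0$, which immediately rules out $s_0 < 0$, while $s_0 = 0$ forces $r \equiv 0$ and hence, upon integrating the trace identity over the compact manifold,
$$0 \;=\; \int_M \Delta f\,dV \;=\; n\lambda\,\mathrm{vol}(M) + \tfrac{1}{m}\int_M |\nabla f|^2\,dV \;>\; 0,$$
a contradiction.

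I expect the real technical obstacle to be the precise derivation of the $\Delta_f s$ identity and verifying that $m \ge 1$ is the sharp range in which the correction term $R_m$ cooperates with the Cauchy-Schwarz bound $|r|^2 \ge s^2/n$; the rest is a fairly direct adaptation of Hamilton's maximum-principle argument for shrinking Ricci solitons.
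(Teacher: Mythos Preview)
The paper itself contains no proof of this proposition; it merely quotes it from \cite{CaseEtAl} (Proposition~3.6 and Remark~3.8 there). So there is nothing in the paper to compare your argument against.

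Your outline is nonetheless in the spirit of the cited source: derive an elliptic identity for the scalar curvature and apply the maximum principle, the Ricci-soliton case $m=\infty$ being the classical model. Your first-order identity
\[
\nabla s \;=\; \tfrac{2(m-1)}{m}\,r(\nabla f,\cdot) \;+\; \tfrac{2\bigl(s-(n-1)\lambda\bigr)}{m}\,df
\]
is correct as written (the computation from the contracted Bianchi identity and the trace equation closes up exactly as you indicate). The only genuine gap, which you already flag, is that the second-order identity is left schematic: the ``correction term $R_m$'' is never written down, and the claim that $m\ge 1$ is precisely the range in which its sign cooperates with $|r|^2\ge s^2/n$ is asserted rather than verified. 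Your endgame also tacitly drops $R_m$ when you conclude $|r|^2\equiv \lambda s_0$ from $s\equiv s_0$, so that step is not yet rigorous for finite $m$. In \cite{CaseEtAl} the finite-$m$ case is in fact handled somewhat differently, via the substitution $u=e^{-f/m}$ and the Kim--Kim constant, rather than by directly extending Hamilton's $\Delta_f s$ computation; either route works once the details are filled in, but as it stands your sketch is complete only at $m=\infty$.
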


We will additionally need the following Myers-type result of W. Wylie \cite{Wylie} which has implications for the topology of compact shrinking $m$-quasi-Einstein manifolds when $m>0$:
\begin{Prop}\label{Wylie}
A complete Riemannian manifold satisfying
\[r+\frac{1}{2}\pounds_X g \geq \lambda g\]
for some vector field $X$ and constant $\lambda > 0$
has finite fundamental group.
\end{Prop}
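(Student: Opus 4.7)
The plan is to argue by contradiction. Assume $\pi_1(M)$ is infinite, pass to the Riemannian universal cover $\pi\colon \tilde M \to M$, and aim to show that $\tilde M$ has finite diameter. Since $\tilde M$ is complete, Hopf-Rinow would then give compactness, but a compact manifold cannot carry a free properly discontinuous action of an infinite group by isometries (orbits would be infinite and discrete in a compact space, forcing an accumulation point and contradicting discreteness). Lifting $X$ to $\tilde X$ preserves the hypothesis on $\tilde M$, and by $\pi_1$-equivariance $|\tilde X|_{\tilde g}$ is the pull-back of $|X|_g$.

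The workhorse is the classical index form. For a unit-speed minimizing geodesic $\gamma\colon[0,L]\to\tilde M$, a parallel orthonormal frame $\{E_i\}_{i=1}^{n-1}$ perpendicular to $\dot\gamma$, and any $\phi\in C^1([0,L])$ with $\phi(0)=\phi(L)=0$, summing the second-variation inequalities for the variation fields $V_i=\phi E_i$ yields $(n-1)\int_0^L|\phi'|^2\,dt\geq\int_0^L \phi^2\, r(\dot\gamma,\dot\gamma)\,dt$. Because $\dot\gamma$ is parallel, $\tfrac{1}{2}\pounds_{\tilde X}\tilde g(\dot\gamma,\dot\gamma)=\tilde g(\nabla_{\dot\gamma}\tilde X,\dot\gamma)=\tfrac{d}{dt}\tilde g(\tilde X,\dot\gamma)$, so the hypothesis becomes $r(\dot\gamma,\dot\gamma)\geq\lambda-\tfrac{d}{dt}\tilde g(\tilde X,\dot\gamma)$ along $\gamma$. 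Substituting this bound and integrating by parts (the boundary term vanishes since $\phi$ vanishes at the endpoints) produces
\[
(n-1)\int_0^L |\phi'|^2\,dt \;\geq\; \lambda\int_0^L \phi^2\,dt\;+\;2\int_0^L \phi\phi'\,\tilde g(\tilde X,\dot\gamma)\,dt.
\]

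Plugging in the test function $\phi(t)=\sin(\pi t/L)$, so that $2\phi\phi'=(\pi/L)\sin(2\pi t/L)$ is uniformly bounded in absolute value by $\pi/L$, the inequality collapses to
\[
\frac{(n-1)\pi^2}{2L}-\frac{\lambda L}{2}\;\geq\;-\pi\sup_{t\in[0,L]}|\tilde g(\tilde X(\gamma(t)),\dot\gamma(t))|,
\]
which rearranges to $L\leq 2\pi\sup_\gamma|\tilde X|/\lambda + O(1/L)$. The main obstacle is securing an $L$-independent bound on $\sup_\gamma|\tilde X|$. In the regime where this paper applies the result, $M$ is compact, so $\sup_{\tilde M}|\tilde X|=\sup_M|X|<\infty$ by continuity on the compact quotient; the estimate above then uniformly caps the lengths of minimizing geodesics in $\tilde M$, giving $\diam(\tilde M)<\infty$ and the desired contradiction. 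For a genuinely non-compact complete $M$, a more refined Bakry-\'Emery/Riccati-comparison argument is needed to control $|\tilde X|$ along long minimizing rays, and this is the subtle ingredient in Wylie's general result.
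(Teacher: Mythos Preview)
The paper does not prove this proposition; it is quoted as a result of Wylie and invoked as a black box, so there is no in-paper argument to compare your attempt against.

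On its merits: your sketch is the standard second-variation set-up and is adequate for the only use the paper makes of the result (Corollary~\ref{RSBetti}, where $M$ is compact). In that regime $\sup_{\tilde M}|\tilde X|=\sup_M|X|<\infty$, and your sine test function then gives a uniform bound on the length of every minimizing geodesic in $\tilde M$; Hopf--Rinow forces $\tilde M$ compact, contradicting an infinite deck group. The algebra after ``which rearranges to'' is a little loose (the $O(1/L)$ term still contains $L$), but solving the resulting quadratic in $L$ does give an honest uniform bound, so the conclusion stands.

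For the proposition as actually stated (merely complete $M$), your argument is incomplete, and you correctly flag the obstruction: with $\phi(t)=\sin(\pi t/L)$ the term $\int 2\phi\phi'\,\tilde g(\tilde X,\dot\gamma)\,dt$ feels $\tilde X$ along the whole geodesic, and nothing bounds it. Wylie's device is not a Bakry--\'Emery growth estimate on $|\tilde X|$ but a relocation of the problem. One does not bound $\diam(\tilde M)$; one bounds only $d(\tilde p,\gamma\!\cdot\!\tilde p)$ for deck transformations $\gamma$. Since $\tilde X$ is $\pi_1$-equivariant, $|\tilde X|$ is bounded by the fixed constant $\sup_{B(\tilde p,1)}|\tilde X|$ on unit neighbourhoods of \emph{both} endpoints of a minimizing segment from $\tilde p$ to $\gamma\!\cdot\!\tilde p$. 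Replacing the sine by a test function that equals $1$ on $[1,L-1]$ and is supported in $[0,L]$ localizes the $\tilde X$-contribution to those neighbourhoods, yielding an $L$-bound independent of $\gamma$; an infinite $\pi_1$ would force an unbounded orbit, a contradiction. Supplying this step turns your outline into Wylie's proof.
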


\begin{Cor}\label{RSBetti}
A compact shrinking $m$-quasi-Einstein manifold with $m>0$ has finite fundamental group, and therefore the first Betti number $b_1=0$.
\end{Cor}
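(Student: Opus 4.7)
The strategy is to reduce the corollary directly to Proposition \ref{Wylie} by showing that the $m$-quasi-Einstein equation implies the differential inequality $r + \tfrac{1}{2}\pounds_X g \geq \lambda g$ whenever $m>0$ and $\lambda>0$. The plan is essentially algebraic: rearrange the defining equation \eqref{eq:RS} to isolate $r + \tfrac{1}{2}\pounds_X g$, observe that the leftover term on the right-hand side is a nonnegative symmetric tensor, and then invoke the preceding proposition.

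More concretely, I would first rewrite \eqref{eq:RS} as
\begin{equation*}
r + \frac{1}{2}\pounds_X g \;=\; \lambda g + \frac{1}{m}\, X^{\flat}\otimes X^{\flat}.
\end{equation*}
Next I would note that for any tangent vector $Y$, we have $(X^{\flat}\otimes X^{\flat})(Y,Y) = (g(X,Y))^2 \geq 0$, so $X^{\flat}\otimes X^{\flat}$ is positive semidefinite as a symmetric $(0,2)$-tensor. Since $m>0$, the coefficient $1/m$ is positive, hence $\tfrac{1}{m} X^{\flat}\otimes X^{\flat} \geq 0$, which gives the pointwise tensor inequality $r + \tfrac{1}{2}\pounds_X g \geq \lambda g$ with $\lambda>0$. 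A compact manifold is automatically complete, so the hypotheses of Proposition \ref{Wylie} are satisfied, and we conclude that $\pi_1(M)$ is finite.

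The passage from finite fundamental group to $b_1 = 0$ is then standard: since $H_1(M,\mathbb{Z})$ is the abelianization of $\pi_1(M)$, it is a finite abelian group, so $H_1(M,\mathbb{R}) = H_1(M,\mathbb{Z})\otimes_{\mathbb{Z}}\mathbb{R} = 0$, and by the universal coefficient theorem $H^1(M,\mathbb{R})=0$, i.e.\ $b_1=0$. There is essentially no obstacle in this argument; the content lies entirely in recognizing that the sign of $m$ makes the correction term $\tfrac{1}{m}X^{\flat}\otimes X^{\flat}$ cooperate with rather than oppose the Ricci-type lower bound, which is precisely why the Myers-type result of Wylie applies on the nose.
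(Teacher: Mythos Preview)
Your proof is correct and follows essentially the same route as the paper's own argument: rearrange \eqref{eq:RS} so that the nonnegative tensor $\tfrac{1}{m}X^{\flat}\otimes X^{\flat}$ appears on the right, invoke Proposition~\ref{Wylie} using compactness (hence completeness), and then deduce $b_1=0$ from finiteness of $\pi_1$ via the Hurewicz/abelianization step. The only differences are cosmetic---you spell out the positive-semidefiniteness check and the universal coefficient passage in slightly more detail than the paper does.
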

\begin{proof}
Since shrinking $m$-quasi-Einstein manifolds with $m>0$ have $\lambda>0$ and $\frac{1}{m}X^{\sharp}\otimes X^{\sharp}\geq 0$, and compact Riemannian manifolds are complete, Wylie's theorem applies. Since the first homology group is a quotient of the fundamental group (by the Hurewicz theorem) it must be a finite group as well. So $b_1=\mathrm{rank}(H_1)=0$.
\end{proof}

\subsection{Spin manifolds}
Our references for the material of this section are \cite{LawMich} and \cite{Fried}. A smooth \emph{spin manifold} is an oriented Riemannian manifold whose principal $\mathrm{SO}(n)$-bundle $P\rightarrow M$ of oriented orthonormal frames lifts equivariantly to a principal $\mathrm{Spin}(n)$-bundle $\tilde{P}\rightarrow M$. The topological obstruction to constructing such a bundle is well-known to be the second Stiefel-Whitney class $w_2$, that is, $w_2=0$ if and only if such a lifting exists. The main utility of having such a structure in geometry is to construct the associated Hermitian vector bundle of spinors which has a canonical Levi-Civita connection $\nabla$, lifted from the connection on the tangent bundle and compatible with the Hermitian inner product. In addition to the connection, there is a canonical self-adjoint operator $D$ on sections of the spinor bundle, called the Dirac operator, and the following well-known equation of Lichnerowicz holds:
\begin{equation}
D^2\psi=\nabla^*\nabla\psi + \frac{s}{4}\psi \label{eq:Lich}
\end{equation}
If $M$ is compact and such that $s\ge 0$ and $s>0$ at a point, then pairing \eqref{eq:Lich} with $\psi$ and integrating immediately yields that $\psi$ is $\nabla$-parallel so must have constant length, and this length must be zero since $s>0$ somewhere. Thus $\psi=0$ identically i.e. $\mathrm{ker}(D)=0$. It is a consequence of the Atiyah-Singer Index Theorem that, if $M$ is $4$-dimensional and $\mathrm{ker}(D)=0$, then the signature $\tau$ of $M$ satisfies $\tau=0$.
\begin{Prop}\label{spintau}
A compact $4$-dimensional Riemannian spin manifold such that $s\ge 0$ and $s>0$ at a point must have $\tau=0$.
\end{Prop}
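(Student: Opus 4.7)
The plan is two-staged: first use the Lichnerowicz formula \eqref{eq:Lich} to show $\ker(D)=0$, then deduce $\tau=0$ from the Atiyah-Singer Index Theorem.

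For the first stage, I would take any $\psi\in\ker(D)$ and pair the Lichnerowicz identity with $\psi$ in the $L^2$ inner product on spinors. Since $D^2\psi=0$ and since the $L^2$ pairing of $\nabla^*\nabla\psi$ with $\psi$ equals $\int_M|\nabla\psi|^2\,dV$, this yields $0=\int_M|\nabla\psi|^2\,dV+\int_M\tfrac{s}{4}|\psi|^2\,dV$. The hypothesis $s\ge 0$ makes both integrands non-negative, so both vanish. In particular $\nabla\psi=0$, so $|\psi|$ is constant on $M$; evaluating at a point where $s>0$ forces $|\psi|=0$ there, hence everywhere by constancy of the norm. Thus $\ker(D)=0$.

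For the second stage, recall that in dimension $4$ the spinor bundle splits as $S=S^+\oplus S^-$ and the Dirac operator decomposes as $D^\pm:\Gamma(S^\pm)\to\Gamma(S^\mp)$ with $D^-=(D^+)^*$. The Atiyah-Singer Index Theorem gives $\mathrm{ind}(D^+)=\hat{A}(M)$, and on a spin $4$-manifold one has $\hat{A}(M)=-\tau(M)/8$ (which combines Hirzebruch's signature theorem $\tau=p_1/3$ with the dimension-$4$ identity $\hat{A}=-p_1/24$). Since $\ker(D^\pm)\subseteq\ker(D)=0$ by the first stage, the index vanishes, and we conclude $\tau=0$.

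The argument is classical, so the main obstacle is essentially bookkeeping: remembering the precise form of Atiyah-Singer in dimension $4$ and keeping sign conventions straight. The Bochner-type step is routine once one has \eqref{eq:Lich}, and the spin hypothesis is used only to ensure the spinor bundle exists so that the whole machinery is available.
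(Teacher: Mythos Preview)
Your proposal is correct and follows essentially the same approach as the paper: pair the Lichnerowicz formula with a harmonic spinor and integrate to force $\ker(D)=0$, then invoke the Atiyah--Singer Index Theorem to conclude $\tau=0$. You supply more detail in the index-theoretic step (the explicit relation $\hat{A}(M)=-\tau/8$), whereas the paper simply cites Atiyah--Singer as a black box, but the argument is the same.
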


\subsection{Topology of smooth 4-manifolds}
The work of Freedman and Donaldson in the 1980s revolutionized our understanding of smooth $4$-manifolds, and leads to the following classification result \cite{LeBrun}.
\begin{Prop}\label{4manifolds}
Two smooth compact simply-connected oriented $4$-manifolds are orientedly homeomorphic if and only if
\begin{itemize}
\item they have the same Euler characteristic $\chi$
\item they have the same signature $\tau$; and
\item both are spin, or both are non-spin.
\end{itemize}
\end{Prop}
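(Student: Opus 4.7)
The plan is to reduce the oriented homeomorphism classification of smooth simply-connected closed $4$-manifolds to an algebraic classification of unimodular symmetric bilinear forms over $\mathbb{Z}$, and then invoke Freedman's theorem. The ``only if'' direction is immediate, since $\chi$, $\tau$, and the vanishing of $w_2$ are all oriented homeomorphism invariants.

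For the substantive direction, let $M$ be a smooth compact simply-connected oriented $4$-manifold. By Poincar\'e duality and the universal coefficient theorem, $H_0\cong H_4\cong \mathbb{Z}$, $H_1=H_3=0$, and $H_2\cong\mathbb{Z}^{b_2}$ is free, so $\chi=2+b_2$ determines $\mathrm{rank}(H_2)$. The cup product then defines the intersection form $Q_M$, a unimodular symmetric bilinear form on $H_2(M;\mathbb{Z})$ whose signature is $\tau$. By the Wu formula, $Q_M(x,x)\equiv \langle w_2(M), x\rangle \pmod 2$, so $Q_M$ is even precisely when $M$ is spin.

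Next I would invoke the algebraic classification of unimodular symmetric bilinear forms over $\mathbb{Z}$: any indefinite such form is classified up to isomorphism by its rank, signature, and parity (a theorem of Serre); and Donaldson's theorem guarantees that any \emph{definite} form realized as the intersection form of a smooth closed oriented $4$-manifold must be the diagonal form $\pm\langle 1,\ldots,1\rangle$, which is odd and determined by rank and signature. Either way, the triple $(\chi,\tau,\text{spin-type})$ determines $Q_M$ up to isomorphism. I would then appeal to Freedman's classification of simply-connected closed topological $4$-manifolds: two such are orientedly homeomorphic if and only if they have isomorphic intersection forms and the same Kirby--Siebenmann invariant, and since the latter vanishes for smooth manifolds, the conclusion follows.

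I expect the main conceptual obstacle to lie in the definite case of the algebraic classification, which genuinely requires Donaldson's deep diagonalizability theorem to rule out exotic definite forms such as $E_8$; without it there would exist simply-connected closed \emph{topological} $4$-manifolds whose oriented homeomorphism type is not determined by $(\chi,\tau,\text{spin-type})$ alone, and the classification would fail. Freedman's theorem is a black box here, but its role is clean: converting the algebraic equivalence of intersection forms into a topological equivalence of manifolds.
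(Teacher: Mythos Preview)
Your argument is correct and is precisely the standard Freedman--Donaldson route: reduce to the intersection form via Poincar\'e duality and the Wu formula, classify indefinite unimodular forms by rank, signature, and parity, use Donaldson's diagonalization theorem to handle the definite case, and then apply Freedman's theorem together with the vanishing of the Kirby--Siebenmann invariant for smooth manifolds. The only minor point worth making explicit is the edge case where the intersection form is definite and the manifold is spin: Donaldson forces the form to be diagonal, hence odd unless its rank is zero, so a spin manifold with definite form must have $b_2=0$ and be homeomorphic to $S^4$; your argument covers this implicitly but it is worth saying.

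As for comparison with the paper: there is nothing to compare. The paper does not prove this proposition; it is quoted as a known classification result with a citation to LeBrun's survey, and the surrounding text simply attributes it to ``the work of Freedman and Donaldson in the 1980s.'' Your sketch is exactly the argument that citation is pointing to, so you have supplied what the paper deliberately left as a black box.
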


This proposition has the following corollary:

\begin{Cor}\label{4dList}
Any smooth spin simply-connected oriented $4$-manifold such that $\frac{11}{8}|\tau|\leq b_2$ is orientedly homeomorphic to $S^4\#mK3\#n\overline{K3}\#p(S^2\times S^2)$, $m,\,n,\,p\geq 0$. Any smooth non-spin simply-connected oriented $4$-manifold is orientedly homeomorphic to $k\mathbb{CP}^2\#l\overline{\mathbb{CP}}^2$, $k,\,l\geq 0$ and not both zero.
\end{Cor}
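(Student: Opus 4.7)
My plan is to prove this as a direct consequence of Proposition~\ref{4manifolds}: for each case I will produce a connected sum of the prescribed building blocks whose Euler characteristic, signature, and spin type agree with those of $M$, and then invoke Freedman's classification. Recall that under connected sum of simply connected summands one has $\chi(X\#Y)=\chi(X)+\chi(Y)-2$ and $\tau(X\#Y)=\tau(X)+\tau(Y)$, and that a connected sum is spin if and only if each summand is. The relevant building-block invariants are $(\chi,\tau)=(24,-16)$ for $K3$, $(24,16)$ for $\overline{K3}$, $(4,0)$ for $S^2\times S^2$, $(3,1)$ for $\mathbb{CP}^2$, and $(3,-1)$ for $\overline{\mathbb{CP}}^2$; only the last two are non-spin.

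For the spin case I would invoke Rokhlin's theorem to get $16\mid\tau(M)$ and set $m=\max(0,-\tau/16)$ and $n=\max(0,\tau/16)$, so that $n-m=\tau/16$ and $m+n=|\tau|/16$ match signatures. Matching second Betti numbers then requires $22(m+n)+2p=b_2$, i.e.\ $p=\tfrac12\bigl(b_2-\tfrac{11}{8}|\tau|\bigr)$. The hypothesis $\tfrac{11}{8}|\tau|\le b_2$ forces $p\ge0$, and $p$ is an integer because $b_2$ is even for a simply connected spin $4$-manifold---its intersection form is an even unimodular $\mathbb{Z}$-lattice, hence a sum of $E_8$-blocks of rank $8$ and hyperbolic blocks of rank $2$---while $\tfrac{11}{8}|\tau|=22(|\tau|/16)$ is an even integer as $16\mid\tau$. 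Then $m\,K3\#n\,\overline{K3}\#p(S^2\times S^2)$ (read as $S^4$ if $m=n=p=0$) is spin, simply connected, with matching invariants, and Proposition~\ref{4manifolds} identifies it with $M$.

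For the non-spin case the arithmetic is much easier: set $k=b_+(M)$ and $l=b_-(M)$, which are automatically non-negative integers with $k+l=b_2$ and $k-l=\tau$. Then $k\,\mathbb{CP}^2\#l\,\overline{\mathbb{CP}}^2$ has matching $\chi$ and $\tau$ and is non-spin provided $k+l>0$. A simply connected $4$-manifold with $b_2=0$ would have $H^2(M;\mathbb{Z}/2)=0$ and hence vanishing $w_2$, i.e.\ would itself be spin, so the non-spin hypothesis rules this out and a second application of Proposition~\ref{4manifolds} concludes.

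The only real obstacle is verifying that $p$ is a non-negative integer in the spin case, which is precisely where the coefficient $11/8$ in the hypothesis comes from: Rokhlin supplies the integrality and the hypothesis supplies the non-negativity. Once those are in place the proof is essentially an exercise in arithmetic backed by Freedman's classification.
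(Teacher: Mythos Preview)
Your proposal is correct and follows exactly the approach the paper sketches: compute the invariants of the building blocks, solve for the coefficients, and invoke Proposition~\ref{4manifolds}. You have supplied the details the paper omits---notably the role of Rokhlin's theorem in making $p$ an integer and the observation that $b_2=0$ forces $w_2=0$ in the non-spin case---so your argument is in fact more complete than the paper's one-sentence justification.
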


This is established by merely computing the Euler characteristic and signature of the listed manifolds and showing that one can always choose the integers appropriately to match the Euler characteristic and signature of any given $4$-manifold satisfying the hypotheses; then apply Proposition \ref{4manifolds}.

\section{Proof of Theorem \ref{main}}

In this section we present the proofs of Theorem~\ref{main} and Theorem \ref{aux}.

\subsection{Proof of Theorem \ref{main}}
\begin{proof}
Since $M^4$ is a compact gradient $m$-quasi-Einstein manifold, by Prop.~\ref{TrivOrShrink} it is either trivial, and thus an Einstein manifold, or shrinking. If the former, we are done by Hitchin and Thorpe's original result. If the latter, then since $m\in [1,\infty]$ we have $s>0$ by Prop.~\ref{PosScal}. Now because $M^4$ is additionally a spin manifold, we find $\tau=0$ by Prop.~\ref{spintau}. As $M^4$ is a compact shrinking $m$-quasi-Einstein manifold, by Corollary~\ref{RSBetti} we have $b_1=0$ but also $b_3=0$ by the compactness of $M$ and Poincar\'{e} Duality. Therefore $2\chi\pm 3\tau=2(b_0+b_2+b_4)=2(2+b_2)\ge 4>0$.
\end{proof}

\subsection{Proof of Theorem \ref{aux}}
\begin{proof}
Since $\pi_1(M^4)$ is finite by Corollary \ref{RSBetti} the universal cover $\tilde{M}$ is a compact oriented spin nontrivial gradient $m$-quasi-Einstein manifold by pulling everything back via the covering map. Since $\tilde{M}$ is a compact nontrivial gradient $m$-quasi-Einstein manifold, we have by the proof of Theorem \ref{main} that $\frac{11}{8}|\tau(\tilde{M})|=0\leq b_2(\tilde{M})$ so $\tilde{M}\approx S^4\#mK3\#n\overline{K3}\#p(S^2\times S^2)$ by Corollary \ref{4dList}. Now compute using $\tau(S^4)=\tau(S^2\times S^2)=0$, $\tau(K3)=-16$, $\tau(\overline{K3})=16$ that $0=\tau(\tilde{M})=-16(m-n)$; thus $m=n$. However, $m(K3\#\overline{K3})\approx 22(S^2\times S^2)$ by Corollary \ref{4dList} since both are spin manifolds with the same Euler characteristic and signature. Thus $\tilde{M}\approx S^4\#(22m+p)(S^2\times S^2)$, and setting $k=22m+p$ completes the proof.
\end{proof}

\section{Concluding Remarks}

\subsection{The Ricci Soliton Case}

Our results are true in the $m=\infty$ case without the gradient hypothesis because of the well-known result of Perelman (see Remark 3.2 of \cite{Per}) that compact Ricci solitons are gradient Ricci solitons. However there are compact nontrivial nongradient $m$-quasi-Einstein manifolds for finite $m$ \cite{BRS}. With this in mind, if one tries to prove the key equation used in \cite{CaseEtAl} to prove Prop.~\ref{PosScal} without the gradient hypothesis, one is stymied by additional terms that only vanish when $X^{\flat}$ is closed. Stating our results with this hypothesis is, however, no more general than the gradient case $X^{\flat}=\mathrm{d}f$, since by Cor.~\ref{RSBetti} a closed $1$-form is here exact.

An interesting aspect of the Ricci soliton case is that it is not actually known whether nontrivial compact $4$-dimensional spin Ricci solitons exist. So while Theorem \ref{main} rules out the possibility of a counterexample to the Hitchin-Thorpe inequality among such manifolds, it is not clear whether or not this is so merely by virtue of there being no such manifolds whatsoever!

\begin{?} Can one can construct a nontrivial Ricci soliton metric on a compact spin $4$-dimensional manifold?
\end{?}

\noindent Theorem \ref{aux} rules out some of the more complicated topologies. It would be interesting to try to construct such a metric on $S^2\times S^2$ given the existence of an elementary construction of an Einstein-Weyl metric on the same manifold in \cite{Ped}.

\subsection{Expanding Cao's Question}

\noindent Let us consider the original inequality of Hitchin-Thorpe for Einstein manifolds, the restricted results of this paper, and the Hitchin-Thorpe inequality for Einstein-Weyl manifolds originally proved in \cite{PedPoon}. An Einstein-Weyl manifold can be defined merely as an $m$-quasi-Einstein manifold where $m=-(n-2)$ and $\lambda$ is allowed to be a smooth function instead of merely a constant. (One can see this by comparing the $m$-quasi-Einstein equation to the Einstein-Weyl equation on page 100 of \cite{Ped}.) In the language of \cite{BG}, an Einstein-Weyl manifold is a \emph{generalized} $m$-quasi-Einstein manifold. We take these results as evidence in favor of an affirmative answer to the following. 

\begin{?} Does the Hitchin-Thorpe Inequality hold for any compact oriented 4-dimensional generalized $m$-quasi-Einstein manifold?
\end{?}

The author knows of no examples that demonstrate the answer to be `no.' The following is a list of known compact nontrivial generalized $m$-quasi-Einstein manifolds in dimension four; the underlying manifolds all satisfy the Hitchin-Thorpe inequality:

\begin{itemize}
\item According to \cite{BR}, there is a nontrivial generalized gradient $m$-quasi-Einstein metric on $S^4$ for any $m\neq 0, \pm\infty$ (in fact for any $S^n$). These are nontrivial in the technical sense stated above, that is, the gradient field is not the zero field. However the metric is not new; it is the standard round metric.
\item There is a one-parameter family of nontrivial $m$-quasi-Einstein metrics on 
$\mathbb{CP}^2\#\overline{\mathbb{CP}}^2$ for $m\in (1,\infty]$ which limits to the well-known Cao-Koiso soliton (\cite{CaoSoliton}, \cite{Koiso}) when $m=\infty$, as follows from Theorem 5.8 in \cite{Case}; see that work for details and further references. 
\item The Ricci soliton of Wang and Zhu \cite{WangZhu} is on the manifold $\mathbb{CP}^2\#\overline{\mathbb{CP}}^2\#\overline{\mathbb{CP}}^2$.
\item There are Einstein-Weyl metrics which are not conformal to an Einstein metric on $S^2\times S^2$ and $\mathbb{CP}^2\#\overline{\mathbb{CP}}^2$ according to Corollary 4.3 of \cite{Ped}
\end{itemize}

The relaxation of the constant $\lambda$ to a function seems unwarranted based on the arguments of this paper, but our arguments clearly don't go to the heart of the matter anyway, attached as they are to the case of spin manifolds. However, the $m=-(n-2)$ case is established, as noted above, and if we revisit the original arguments of Hitchin and Thorpe in the case of Einstein manifolds, we find they do not require that $\lambda$ is constant. Of course $\lambda$ is constant by the Contracted Bianchi Identity, but this is entirely irrelevant to the line of argument in the proof. We interpret this as evidence that a natural proof of the affirmative answer to our question will not distinguish whether $\lambda$ is constant or a function.

The author is currently pursuing this question.

\section{Acknowledgements} I would like to thank Lehigh University and Rutgers University for their institutional support, and Huai-Dong Cao for his encouragement when I, as his graduate student, began thinking about his question regarding the Hitchin-Thorpe inequality for Ricci solitons. I am also grateful to Homare Tadano for several comments which improved this article.

\end{document}